\documentclass[11pt,bezier]{article}
\usepackage{tikz}
\usetikzlibrary{arrows.meta,decorations.markings,angles,
calc, chains,
quotes,
positioning,
shapes.geometric,patterns,snakes}
\usetikzlibrary{arrows,shapes,trees}
\setcounter{page}{1}
\usepackage{amsmath,amssymb,amsfonts,euscript,graphicx}
\usepackage{float}
\usepackage{caption}
\usepackage{bigstrut}
\usepackage{tabu}

\textwidth = 15 cm \textheight = 20 cm \oddsidemargin =0.7 cm
\evensidemargin = -3 cm \topmargin = 1 cm
\parskip = 2 mm
\newtheorem{preproof}{{\bf \indent Proof.}}

\newenvironment{proof}[1]{\begin{preproof}{\rm
               #1}\hfill{$\Box$}}{\end{preproof}}

%%%%%%%%%%%%%%%%%%%%%%%%%%%%%%%%%%%%%%%%%%%%%%%%%%%%%%%%%%%%%%%%%%%%%%%%%%%%%%%%%%%%%%%%%%%%%%%%%%%%%%%%%%%%%%%%%%%%%%%%%%%%%%%%%%%%%%%%%%%%%%%%%%%%
%\newcommand {\proof}{{\bf Proof.}}

\newtheorem{example}{\bf\indent Example}[section]
\newtheorem{thm}{{\bf\indent Theorem}}[section]

\newtheorem{lem}{\bf\indent Lemma}[section]

%\renewcommand{\baselinestretch}{1.2}

%\renewcommand{\baselinestretch}{1.2}
%%%%%%%%%%%%%%%%%%%%%%%%%%%%%%%%%%%%%%%%%%%%%%%%%%%%%%%%%%%%%%%%%%%%%%%%%%%%%%%%%%%%%%%%%%%%%%%%%%%%%%%%%%%%%%%%%%%%%%%%%%%%%%%%%%%%%%%%%%%%%%%%%%%%%%%
\title{\bf \large  Sombor index of clean graphs\thanks
{{\it Key Words}: Clean graph, Sombor index, Vertex degree\newline
{\indent{~~2020 {\it Mathematics Subject Classification}: 13A15, 13C05.}}}}

\author{{\normalsize  {\sc M. Badie${}^{\mathsf{}}$,  R. Nikandish${}^{\mathsf{}}$\thanks{Corresponding author},     M. Pirniia${}^{\mathsf{}}$}
}\vspace{3mm}\\
{\footnotesize{${}^{\mathsf{}}$}}\\
%{\footnotesize{${}^{\mathsf{}}$\it Islamic Azad University (IAU), Tehran, Iran}}\\
{\footnotesize{${}^{\mathsf{}}$\it  Department of Mathematics,
Jundi-Shapur University of Technology,  Dezful,
Iran}}\\
{\footnotesize{${}^{\mathsf{}}$\it}}\\
{\footnotesize{$\mathsf{badie@jsu.ac.ir}$\quad\quad $\mathsf{r.nikandish@ipm.ir}$ \quad\quad
$\mathsf{mehdipirnya442@gmail.com}$\quad\quad}}}

\begin{document}

\maketitle

%%%%%%%%%%%%%%%%%%%%%%%%%%%%%%%%%%%%%%%%%%%%%%%%%%%%%%%%%%%%%%%%%%%%%%%%%%%%%%%%%%%%%%%%%%%%%%%%%%%%%%%%%%%%%%%%%%%%%%%%%%%%%%%%%%%%%%%%%%%%%%%%%%%%%%%
\begin{abstract}
{\small Let $G=(V,E)$ be a graph with the vertex set $V(G)$ and edge set $E(G)$. The Sombor index of $G$, denoted by $SO(G)$, is defined as $\sum_{uv\in E(G)} \sqrt{deg(u)^{2}+deg(v)^{2}}$, where $deg(u)$ is the degree of vertex $u$ in $V(G)$. The clean graph of a ring $R$, denoted by $Cl(R)$, is a graph with vertex set  $\{(e,u): e\in Id(R), u\in U(R)\}$ and two distinct vertices $(e,u)$ and $(f,v)$ are adjacent if and only if $ef=0$ or $uv=1$ ($Id(R)$ and $U(R)$ are the sets of idempotents and unit elements of $R$, respectively). The induced subgraph on $\{(e,u): e\in Id^\ast(R), u\in U(R)\}$ is denoted by $Cl_2(R)$. In this paper,  $SO(Cl_2(\mathbb {Z}_n))$, for different values of the positive integer $n$, is investigated.}
\end{abstract}
%%%%%%%%%%%%%%%%%%%%%%%%%%%%%%%%%%%%%%%%%%%%%%%%%%%%%%%%%%%%%%%%%%%%%%%%%%%%%%%%%%%%%%%%%%%%%%%%%%%%%%%%%%%%%%%%%%%%%%%%%%%%%%%%%%%%%%%%%%%%%%%%%%%%%%%
\begin{center}\section{Introduction}\end{center}
Throughout this article, all graphs $G = (V,E)$ are simple and connected. Let
$G$ be a graph with the vertex set $V(G)$ and the edge set $E(G)$. By $uv\in E(G)$, we denote the edge of $G$ connecting the vertices $u$ and $v$. A complete graph of order $n$ is denoted by $K_{n}$. Moreover, an induced subgraph on $S\subseteq V(G)$ is denoted by $G[S]$. The Sombor index of a graph $G$, $SO(G)$, is defined as $\sum_{uv\in E(G)} \sqrt{deg(u)^{2}+deg(v)^{2}}$, where $deg(u)$ is the degree of vertex $u$ in $V(G)$. The Sombor index is one of the most recent graph invariants which has been devised by Gutman in 2021 \cite{Gut} based on the degree of vertices. This interesting chemical graph theoretic concept has became the subject of hundreds papers in the period of 2021 to 2024, see for instance \cite{Ali, Das, Do, Shet, Wei}. These works motivated algebraic graph theorists to calculate the Sombor index in graphs associated with algebraic structures; Some example in this field may be found in \cite{gur, rat}. 

Throughout this paper, all rings are assumed to be commutative with identity. The set of all idempotent, unit and self-inverse elements  of $R$ are denoted by  $Id(R)$, $U(R)$ and $U'(R)$. Moreover, we let $U''(R)=U(R)\setminus U'(R)$. For a subset $T$ of a ring $R$ we let $T^*=T\setminus\{0\}$. 

The clean graph of a ring $R$, denoted by $Cl(R)$, is a graph with vertex set  $\{(e,u): e\in Id(R), u\in U(R)\}$ and two distinct vertices $(e,u)$ and $(f,v)$ are adjacent if and only if $ef=0$ or $uv=1$ ($Id(R)$ and $U(R)$ are the sets of idempotents and unit elements of $R$, respectively). The induces subgraph on $\{(e,u): e\in Id^\ast(R), u\in U(R)\}$ is denoted by $Cl_2(R)$. The idea of clean graph was first introduced by Habibi et. al. in \cite{hab} and many interesting properties of this graph such as clique and chromatic number, independence number and domination number were studies. The authors also showed that  $Cl_2(R)$ has a crucial role in this graph. Afterwards,  $Cl(R)$ and $Cl_2(R)$ attracted many attentions , for instance see \cite{mat, ram}.  In this paper, we computed $SO(Cl_2(\mathbb {Z}_n))$, for different values of the positive integer $n$.
%%%%%%%%%%%%%%%%%%%%%%%%%%%%%%%%%%%%%%%%%%%%%%%%%%%%%%%%%%%%%%%%%%%%%%%%%%%%%%%%%%%%%%%%%%%%%%%%%%%%%%%%%%%%%%%%%%%%%%%%%%%%%%%%%%%%%%%%%%%%%%%%%%%%%%%
%%%%%%%%%%%%%%%%%%%%%%%%%%%%%%%%%%%%%%%%%%%%%%%%%%%%%%%%%%%%%%%%%%%%%%%%%%%%%%%%%%%%%%%%%%%%%%%%%%%%%%%%%%%%%%%%%%%%%%%%%%%%%%%%%%%%%%%%%%%%%%%%%%%%%%%%
\section{Main Results}
 \noindent
 
%%%%%%%%%%%%%%%%%%%%%%%%%%%%%%%%%%%%%%%%%%%%%%%%%%%%%%%%%%%%%%%%%%%%%%%%%%%%%%%%%%%%%%%%%%%%%%%%%%%%%%%%%%%%%%%%%%%%%%%%%%%%%%%%%%%%%%%%%%%%%%%%%%%%%%%
%%%%%%%%%%%%%%%%%%%%%%%%%%%%%%%%%%%%%%%%%%%%%%%%%%%%%%%%%%%%%%%%%%%%%%%%%%%%%%%%%%%%%%%%%%%%%%%%%%%%%%%%%%%%%%%%%%%%%%%%%%%%%%%%%%%%%%%%%%%%%%%%%%%%%%%%

We recall the following two lemmas which determine the number of idempotent and self-inverse elements in the ring  $\mathbb{Z}_n$,  where $n=\prod\limits_{i=1}^kp_i^{\alpha_i}$, $p_i$'s are distinct primes and $\alpha_i$ are non-negative integers.
\begin{lem}\label{dim1}
	Let $n=\prod\limits_{i=1}^kp_i^{\alpha_i}$, where $p_i$'s are distinct primes. Then ‌ $\mathbb{Z}_n$ contains $2^k$ idempotents. 
\end{lem}

{See \cite{hew}.}

\begin{lem}\label{dim2}
	Suppose that $m$ is a non-negative integer, $n=2^m\prod\limits_{i=1}^kp_i^{\alpha_i}$ and
	$p_i$'s$\neq 2$ are distinct primes. Then 
$ |U'(\mathbb{Z}_n)|=\begin{cases}2^k & \text{if} ~ m\in\{0,1\},	\\
	2^{k+1} & \text{if}~m=2, \\
	2^{k+2} & \text{if}~ m\geq3.

	\end{cases}$
\end{lem}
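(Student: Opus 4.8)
The plan is to observe that an element $u\in U(\mathbb{Z}_n)$ is self-inverse precisely when $u^2=1$, so that $|U'(\mathbb{Z}_n)|$ is exactly the number of solutions of the congruence $x^2\equiv 1$ in $\mathbb{Z}_n$. By the Chinese Remainder Theorem the decomposition $\mathbb{Z}_n\cong \mathbb{Z}_{2^m}\times\prod_{i=1}^k\mathbb{Z}_{p_i^{\alpha_i}}$ is a ring isomorphism, and since an element of a product ring is self-inverse exactly when each of its coordinates is self-inverse, the total count factors as the product of the local counts. Thus I would reduce the problem to counting solutions of $x^2\equiv 1$ modulo each $p_i^{\alpha_i}$ and modulo $2^m$ separately, and then multiply.

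For an odd prime power $p_i^{\alpha_i}$, the group $U(\mathbb{Z}_{p_i^{\alpha_i}})$ is cyclic of even order $p_i^{\alpha_i-1}(p_i-1)$, and a cyclic group has exactly two elements of order dividing $2$, namely $\pm 1$. (Equivalently, $x^2\equiv 1$ gives $p_i^{\alpha_i}\mid (x-1)(x+1)$; since $p_i$ is odd it divides at most one of the factors $x-1$ and $x+1$, so $p_i^{\alpha_i}$ divides one of them and $x\equiv\pm 1$.) Hence each odd factor contributes exactly $2$ self-inverse elements, which accounts for the factor $2^k$ appearing in every case.

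It remains to count the solutions of $x^2\equiv 1\pmod{2^m}$, which is where the case distinction originates and which I expect to be the main point. For $m=0$ and $m=1$ the ring $\mathbb{Z}_{2^m}$ has a single unit, so there is exactly one self-inverse element and the total is $2^k$. For $m=2$ a direct check shows that both units $1$ and $3$ of $\mathbb{Z}_4$ satisfy $x^2\equiv 1$, giving $2$ solutions and total $2^{k+1}$. For $m\ge 3$ I would argue via the $2$-adic valuation: for $x$ odd, exactly one of $x-1$ and $x+1$ has valuation $1$ while the other has valuation at least $2$, so $2^m\mid(x-1)(x+1)$ forces the higher-valuation factor to be divisible by $2^{m-1}$. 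Splitting according to whether $x\equiv 1$ or $x\equiv 3\pmod 4$ then yields the four roots $x\equiv \pm 1$ and $x\equiv \pm 1+2^{m-1}\pmod{2^m}$, so there are $4$ solutions and the total is $2^{k+2}$.

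The only delicate step is the $m\ge 3$ computation, where $U(\mathbb{Z}_{2^m})$ is no longer cyclic (indeed $U(\mathbb{Z}_{2^m})\cong\mathbb{Z}_2\times\mathbb{Z}_{2^{m-2}}$), so the clean ``$\pm 1$ only'' argument fails and two extra square roots of unity appear. The valuation bookkeeping above is the cleanest route I see to capture exactly these four roots while simultaneously confirming that no extra roots arise when $m\le 2$. Once this local count is pinned down, multiplying the local contributions $1,1,2,4$ (for $m=0,1,2,\ge 3$ respectively) by the factor $2^k$ from the odd primes gives the stated formula.
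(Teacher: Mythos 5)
Your proof is correct, but note that the paper does not actually prove this lemma at all: its entire ``proof'' is the citation to \cite[Proposition 3.1]{sin}, so your argument is not an alternative route so much as the only worked-out proof on the table. What you do is the standard local-global computation: identify $U'(\mathbb{Z}_n)$ with the solution set of $x^2\equiv 1\pmod{n}$, use the Chinese Remainder Theorem to factor the count over the prime powers dividing $n$, observe that each odd prime power contributes exactly $2$ (either because $U(\mathbb{Z}_{p^{\alpha}})$ is cyclic of even order, or because an odd $p$ cannot divide both $x-1$ and $x+1$), and compute the local count at $2$ to be $1,1,2,4$ according as $m\in\{0,1\}$, $m=2$, $m\geq 3$. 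All of these steps are sound; in the delicate case $m\geq 3$ your valuation argument correctly forces $x\equiv\pm 1\pmod{2^{m-1}}$, hence the four residues $\pm 1,\ 2^{m-1}\pm 1$ modulo $2^m$, which are pairwise distinct precisely because $m\geq 3$. The only step you assert rather than verify is that these four residues really are roots, which is the one-line check $(\pm 1+2^{m-1})^2=1\pm 2^m+2^{2m-2}\equiv 1\pmod{2^m}$ for $m\geq 2$. Compared with the paper, your version buys self-containedness---the reader need not consult the preprint \cite{sin}---at the price of a paragraph of standard elementary number theory; the paper's citation buys brevity and nothing else.
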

\begin{proof}
{See \cite[Proposition 3.1]{sin}.}

\end{proof}
The next three lemmas calculate $SO(Cl_2(\mathbb {Z}_n))$, where $n \in \{p^\alpha, p^\alpha q^\beta\}$.
\begin{lem}\label{dimfinite}
Let $n=p^\alpha$, where $p\neq 2$ is prime and $\alpha$ is a positive integer. Then $SO(Cl_2(\mathbb {Z}_n))=\frac{\phi(n)-2}{2}\sqrt{2}$.
\end{lem}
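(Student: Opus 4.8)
The plan is to first pin down the vertex set and the adjacency rule of $Cl_2(\mathbb{Z}_n)$ when $n=p^\alpha$, and then read off the Sombor index from a very simple edge structure. Since $\mathbb{Z}_{p^\alpha}$ is a local ring its only idempotents are $0$ and $1$; this also follows from Lemma \ref{dim1} with $k=1$, which yields exactly $2^1=2$ idempotents. Hence $Id^*(\mathbb{Z}_n)=\{1\}$, so every vertex of $Cl_2(\mathbb{Z}_n)$ has the form $(1,u)$ with $u\in U(\mathbb{Z}_n)$, and there are exactly $\phi(n)$ vertices.

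Next I would analyze adjacency. For two distinct vertices $(1,u)$ and $(1,v)$, the first condition $ef=0$ reads $1\cdot 1=1\neq 0$ and therefore never holds. Thus $(1,u)$ and $(1,v)$ are adjacent if and only if $uv=1$, i.e. $v=u^{-1}$. I would then split the units into two classes. If $u\in U''(\mathbb{Z}_n)$, that is $u\neq u^{-1}$, then $(1,u)$ is adjacent to the single vertex $(1,u^{-1})$ and to no other, so it has degree $1$; such vertices pair off into disjoint edges $\{(1,u),(1,u^{-1})\}$. If instead $u\in U'(\mathbb{Z}_n)$, that is $u=u^{-1}$, then the only candidate neighbour $(1,u^{-1})=(1,u)$ coincides with the vertex itself, so $(1,u)$ is isolated and has degree $0$. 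In other words, $Cl_2(\mathbb{Z}_n)$ is a disjoint union of $\tfrac{1}{2}|U''(\mathbb{Z}_n)|$ copies of $K_2$ together with $|U'(\mathbb{Z}_n)|$ isolated vertices.

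To count the edges I would invoke Lemma \ref{dim2}. For $n=p^\alpha$ with $p\neq 2$ we are in the case $m=0$, $k=1$, so $|U'(\mathbb{Z}_n)|=2$ (these self-inverse units being precisely $1$ and $-1$). Hence $|U''(\mathbb{Z}_n)|=\phi(n)-2$, and the number of edges equals $\frac{\phi(n)-2}{2}$.

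Finally, since every edge joins two vertices of degree $1$, each term of the Sombor sum equals $\sqrt{1^2+1^2}=\sqrt{2}$. Summing over all $\frac{\phi(n)-2}{2}$ edges then gives $SO(Cl_2(\mathbb{Z}_n))=\frac{\phi(n)-2}{2}\sqrt{2}$, as claimed. The computation is short, so there is no real obstacle; the only point requiring care is the bookkeeping that self-inverse units yield isolated (degree-zero) vertices rather than loops or edges. Once the graph is recognized as a perfect matching on $U''(\mathbb{Z}_n)$ plus isolated vertices, the Sombor evaluation is immediate.
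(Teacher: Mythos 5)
Your proposal is correct and follows essentially the same route as the paper's own proof: both identify $Id^\ast(\mathbb{Z}_{p^\alpha})=\{1\}$ via Lemma \ref{dim1}, use Lemma \ref{dim2} to get exactly two self-inverse units (yielding two isolated vertices), recognize the graph as $\frac{\phi(n)-2}{2}K_2 \cup 2K_1$, and sum $\sqrt{1^2+1^2}=\sqrt{2}$ over the edges. If anything, your write-up is slightly more explicit than the paper's about why the idempotent condition $ef=0$ never produces edges and why self-inverse units give isolated vertices rather than loops, but the argument is the same.
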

\begin{proof}
{By Lemma \ref{dim1}, $|Id(\mathbb {Z}_n)|=1.$ Also, by Lemma \ref{dim2}, $r=|U'(\mathbb {Z}_n)|=2.$ Thus, we may let, $U'(\mathbb {Z}_n)=\{1,u_2=u_r\}$. Hence $V(Cl_2(\mathbb {Z}_n))=\{(1,1), (1,u_2),\ldots, (1,\phi(n))\}.$ Since $(1,1), (1,u_2)$ are isolated vertices, there are $\phi(n)-2$ vertices of the form $(1,u)$ for which $u\in U''$. Hence for every $(1,u_i)\in V(Cl_2(\mathbb {Z}_n))$ there exists $(1,u_j)$ such that $u_iu_j=1$, for $i\neq j, i,j \geq3.$ Now, it is not hard to see that $Cl_2(\mathbb {Z}_n)=\frac{\phi(n)-2}{2}K_2 \cup 2K_1$. Therefore,  $SO(Cl_2(\mathbb {Z}_n))=\frac{\phi(n)-2}{2}\sqrt{2}$.}
\end{proof}
\begin{figure}[H]
\centering
\begin{tikzpicture}[line cap=round,line join=round,>=triangle 45,x=1.0cm,y=1.0cm]
\draw (6.,3.)-- (6.,2.);
\draw (7.,3.)-- (7.,2.);
\draw (9.,3.)-- (9.,2.);
\draw (7.8,2.66) node[anchor=north west] {$\cdots$};
\begin{scriptsize}
\draw [fill=black] (2.,3.) circle (1.5pt) node[below] {$(1,1)$};
\draw [fill=black] (4.,3.) circle (1.5pt) node[below] {$(1,u_2)$};
\draw [fill=black] (6.,3.) circle (1.5pt);
\draw [fill=black] (7.,3.) circle (1.5pt);
\draw [fill=black] (9.,3.) circle (1.5pt);
\draw [fill=black] (6.,2.) circle (1.5pt);
\draw [fill=black] (7.,2.) circle (1.5pt);
\draw [fill=black] (9.,2.) circle (1.5pt);
\draw[decoration={brace,mirror},decorate] (6,1.75) --(9,1.75) node[below,xshift=-1.5cm,yshift=-0.25cm] {$\dfrac{\phi(n)-2}{2}$};
\end{scriptsize}
\end{tikzpicture}
\caption{$Cl_2(\mathbb{Z}_{p^\alpha})$, $p\neq 2$}
\end{figure}
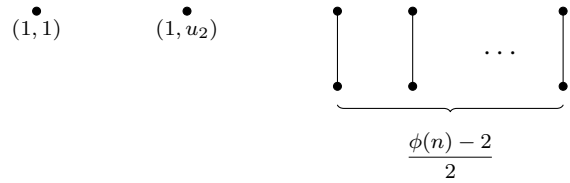
\begin{lem}\label{dimfi}
	Let $n=2^\alpha$, where  $\alpha$ is a positive integer. Then 
$SO(Cl_2(\mathbb {Z}_n))=\frac{\phi(n)-4}{2}\sqrt{2}$.
\end{lem}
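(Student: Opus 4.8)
The plan is to mirror the argument of Lemma \ref{dimfinite}, since $2^\alpha$ is again a prime power and therefore possesses a unique nonzero idempotent. First I would invoke Lemma \ref{dim1} with $k=1$ to conclude that $\mathbb{Z}_{2^\alpha}$ has exactly $2$ idempotents, namely $0$ and $1$, so that $Id^*(\mathbb{Z}_{2^\alpha})=\{1\}$. Every vertex of $Cl_2(\mathbb{Z}_{2^\alpha})$ then has the form $(1,u)$ with $u\in U(\mathbb{Z}_{2^\alpha})$, and in particular there are exactly $\phi(n)=2^{\alpha-1}$ vertices.

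Next I would analyze the adjacency. Two distinct vertices $(1,u)$ and $(1,v)$ are adjacent if and only if $1\cdot 1=0$ or $uv=1$; since $1\neq 0$ in $\mathbb{Z}_{2^\alpha}$, the first condition never holds, so adjacency is equivalent to $uv=1$, that is, $v=u^{-1}$. Consequently each vertex $(1,u)$ has at most one neighbor, namely $(1,u^{-1})$, and this neighbor is distinct from $(1,u)$ precisely when $u\neq u^{-1}$, i.e. when $u\in U''(\mathbb{Z}_{2^\alpha})$. Thus the self-inverse units $U'(\mathbb{Z}_{2^\alpha})$ yield isolated vertices, while the elements of $U''(\mathbb{Z}_{2^\alpha})$ pair off into copies of $K_2$.

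I would then count the pieces using Lemma \ref{dim2}. Writing $n=2^\alpha$ corresponds to $m=\alpha$ and $k=0$ in that lemma; for $\alpha\geq 3$ this gives $r:=|U'(\mathbb{Z}_{2^\alpha})|=4$, with the self-inverse units being exactly $\{1,\,2^{\alpha-1}-1,\,2^{\alpha-1}+1,\,2^\alpha-1\}$. Hence there are $4$ isolated vertices and the remaining $\phi(n)-4$ units split into $\frac{\phi(n)-4}{2}$ inverse-pairs, so that $Cl_2(\mathbb{Z}_{2^\alpha})=\frac{\phi(n)-4}{2}K_2\cup 4K_1$. Since each edge of a $K_2$ joins two vertices of degree $1$ and therefore contributes $\sqrt{1^2+1^2}=\sqrt{2}$ to the Sombor index, while isolated vertices contribute nothing, summing over all edges gives $SO(Cl_2(\mathbb{Z}_{2^\alpha}))=\frac{\phi(n)-4}{2}\sqrt{2}$, as claimed.

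The argument presents no deep obstacle; the only real subtlety is the bookkeeping supplied by Lemma \ref{dim2}. One must select the correct branch ($m\geq 3$) to obtain $r=4$, and I would flag that the stated formula is meaningful only for $\alpha\geq 3$: in the small cases $\alpha\in\{1,2\}$ one has $r\in\{1,2\}$, the graph is a disjoint union of isolated vertices, and its Sombor index is $0$ rather than the negative value the formula would produce. Apart from this case distinction, the proof is the same parity-and-pairing computation as in the odd prime-power case.
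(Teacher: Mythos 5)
Your proof is correct and takes essentially the same route as the paper: reduce to the unique nonzero idempotent, observe that adjacency can only arise from $uv=1$, use Lemma \ref{dim2} to get $r=4$ self-inverse units for $\alpha\geq 3$, conclude $Cl_2(\mathbb{Z}_{2^\alpha})=\frac{\phi(n)-4}{2}K_2\cup 4K_1$, and sum $\sqrt{2}$ per edge. Your closing remark is actually sharper than the paper, which dismisses $\alpha\in\{1,2\}$ with ``there is nothing to prove'': as you note, in those cases the graph consists of isolated vertices with Sombor index $0$ while the stated formula gives a negative number, so the lemma really requires $\alpha\geq 3$.
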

\begin{proof}
{If $\alpha=1,2$, then there is nothing to prove. So let $\alpha\geq3$. By Lemma \ref{dim1}, $|Id(\mathbb {Z}_n)|=1.$ Also, by Lemma \ref{dim1}, $r=|U'(\mathbb {Z}_n)|=4.$ Indeed, $U'(\mathbb {Z}_n)=\{1, 2^{\alpha-1}-1, 2^{\alpha-1}+1, 2^{\alpha}-1\}$. Thus $V(Cl_2(\mathbb {Z}_n))=\{(1,1), (1,u_2),(1,u_3), (1,u_{r=4}),\ldots, (1,\phi(n))\}.$ Since $(1,1),\ldots, (1,u_4)$ are isolated vertices, there are $\phi(n)-4$ vertices of the form $(1,u)$ for which $u\in U''$. Hence for every $(1,u_i)\in V(Cl_2(\mathbb {Z}_n))$ there exists $(1,u_j)$ such that $u_iu_j=1$, for $i\neq j, 1\leq i,j \leq \phi(n).$ Now, it is not hard to see that $Cl_2(\mathbb {Z}_n)=\frac{\phi(n)-4}{2}K_2 \cup 4K_1$. Therefore,  $SO(Cl_2(\mathbb {Z}_n))=\frac{\phi(n)-4}{2}\sqrt{2}$.}
\end{proof}
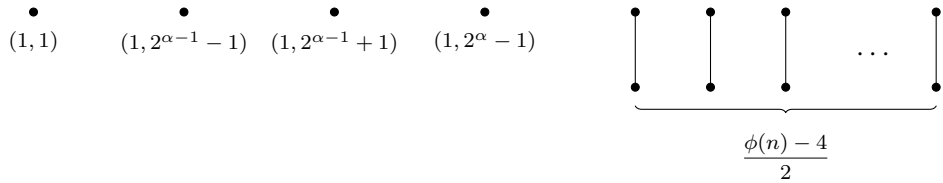
\begin{figure}[H]
\centering
\begin{tikzpicture}[line cap=round,line join=round,>=triangle 45,x=1.0cm,y=1.0cm]
\draw (5.,3.)-- (5.,2.);
\draw (7.,3.)-- (7.,2.);
\draw (9.,3.)-- (9.,2.);
\draw (7.8,2.66) node[anchor=north west] {$\cdots$};
\draw (6.,3.)-- (6.,2.);
\begin{scriptsize}
\draw [fill=black] (1.,3.) circle (1.5pt)  node[below,yshift=-0.15cm] {$(1,2^{\alpha-1}+1)$};
\draw [fill=black] (3.,3.) circle (1.5pt) node[below,yshift=-0.15cm] {$(1,2^\alpha-1)$};
\draw [fill=black] (5.,3.) circle (1.5pt);
\draw [fill=black] (7.,3.) circle (1.5pt);
\draw [fill=black] (9.,3.) circle (1.5pt);
\draw [fill=black] (5.,2.) circle (1.5pt);
\draw [fill=black] (7.,2.) circle (1.5pt);
\draw [fill=black] (9.,2.) circle (1.5pt);
\draw [fill=black] (-1.,3.) circle (1.5pt) node[below,yshift=-0.15cm] {$(1,2^{\alpha-1}-1)$} ;
\draw [fill=black] (-3.,3.) circle (1.5pt) node[below,yshift=-0.15cm] {$(1,1)$};
\draw [fill=black] (6.,3.) circle (1.5pt);
\draw [fill=black] (6.,2.) circle (1.5pt);
\draw[decoration={brace,mirror},decorate] (5,1.75) --(9,1.75) node[below,xshift=-2cm,yshift=-0.25cm] {$\dfrac{\phi(n)-4}{2}$};
\end{scriptsize}
\end{tikzpicture}
\caption{$Cl_2(\mathbb{Z}_{2^\alpha})$}
\end{figure}
\begin{lem}\label{dimfi}
	Let $n=p^\alpha q^\beta$, where $p,q$ are primes and $\alpha, \beta$ are  positive integers. Then \begin{align*}
		SO(Cl_2(\mathbb{Z}_{p^\alpha q^\beta})) &=\dfrac{3}{2}\sqrt{2}(\phi(n)-r)+2r\sqrt{4+(1+\phi(n))^2}\\
		&+2(\phi(n)-r)\sqrt{9+(2+\phi(n))^2}\\
		&+\sqrt{2}(\phi(n)-r)(2+\phi(n))(\phi(n)-r+1)\\
		&+r^2\sqrt{2}(1+\phi(n))+2r(\phi(n)-r)\sqrt{2\phi(n)^2+6\phi(n)+5}.
	\end{align*}
\end{lem}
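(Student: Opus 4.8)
The plan is to push the computation through the Chinese Remainder Theorem decomposition $\mathbb{Z}_n\cong\mathbb{Z}_{p^\alpha}\times\mathbb{Z}_{q^\beta}$. By Lemma \ref{dim1} the ring $\mathbb{Z}_n$ has exactly $2^2=4$ idempotents, so $Id^\ast(\mathbb{Z}_n)=\{e_1,e_2,e_3\}$ has three elements; under the product decomposition these are $e_1=(1,0)$, $e_2=(0,1)$ and $e_3=(1,1)=1$. The arithmetic that drives everything is that $e_1e_2=0$ while $e_1e_3=e_1\neq0$, $e_2e_3=e_2\neq0$, and $e_i^2=e_i\neq0$ for each $i$. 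I would partition $V(Cl_2(\mathbb{Z}_n))$ into three blocks $B_i=\{(e_i,u):u\in U(\mathbb{Z}_n)\}$, each of size $\phi(n)$, and read off all adjacencies from these products. Throughout write $r=|U'(\mathbb{Z}_n)|$.

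Next I would describe the edge set block by block. Since $e_1e_2=0$, every vertex of $B_1$ is joined to every vertex of $B_2$, giving a complete bipartite graph on $B_1\cup B_2$. Since $e_1e_3,e_2e_3\neq0$, a vertex $(e_3,v)$ is adjacent to $(e_1,u)$ (resp.\ $(e_2,u)$) only when $uv=1$, i.e.\ $v=u^{-1}$; this yields a perfect matching between $B_1$ and $B_3$ and another between $B_2$ and $B_3$. Finally, inside each block $B_i$, since $e_i^2\neq0$, two distinct vertices $(e_i,u),(e_i,v)$ are adjacent exactly when $uv=1$; as a self-inverse unit contributes no edge, this is the matching pairing each non-self-inverse unit with its inverse, giving $\tfrac{\phi(n)-r}{2}$ edges inside each of $B_1,B_2,B_3$.

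With the adjacencies fixed, the degrees follow by bookkeeping according to whether the unit coordinate is self-inverse. A vertex in $B_1$ or $B_2$ has degree $\phi(n)+1$ when its unit is self-inverse (full join to the opposite block, one matching edge to $B_3$, none inside its own block) and $\phi(n)+2$ otherwise; a vertex in $B_3$ has degree $2$ when its unit is self-inverse and $3$ otherwise. Each block thus splits into $r$ self-inverse vertices and $\phi(n)-r$ non-self-inverse ones. The last step is to group the edges by the degree-pair of their endpoints and sum $\sqrt{\deg^2+\deg^2}$ over each class. The $B_1$--$B_2$ complete bipartite part splits into four classes: $r^2$ edges of type $(\phi(n){+}1,\phi(n){+}1)$, $2r(\phi(n){-}r)$ of mixed type $(\phi(n){+}1,\phi(n){+}2)$ using $(\phi(n){+}1)^2+(\phi(n){+}2)^2=2\phi(n)^2+6\phi(n)+5$, and $(\phi(n){-}r)^2$ of type $(\phi(n){+}2,\phi(n){+}2)$; the two block-to-$B_3$ matchings supply the $\sqrt{4+(1+\phi(n))^2}$ and $\sqrt{9+(2+\phi(n))^2}$ terms; the in-block matchings of $B_1,B_2$ (endpoints of degree $\phi(n)+2$) combine with the $(\phi(n){+}2,\phi(n){+}2)$ bipartite edges into $\sqrt2(\phi(n)-r)(2+\phi(n))(\phi(n)-r+1)$; and the in-block matching of $B_3$ (endpoints of degree $3$) gives $\tfrac32\sqrt2(\phi(n)-r)$. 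Summing all classes reproduces the stated formula.

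The main obstacle — really the only place an error is likely — is the edge bookkeeping itself. One must not overlook the in-block matching inside $B_3$, which alone accounts for the $\tfrac32\sqrt2(\phi(n)-r)$ summand, and one must correctly split the heterogeneous complete bipartite graph on $B_1\cup B_2$ into its four degree-homogeneous edge classes before applying the Sombor summand. A convenient soundness check is that the whole analysis is uniform in $p,q$: the dependence on the $2$-adic valuation of $n$ enters only through the single parameter $r=|U'(\mathbb{Z}_n)|$ (controlled by Lemma \ref{dim2}), so no separate case for $p=2$ or $q=2$ is needed.
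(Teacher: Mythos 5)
Your proposal is correct and follows essentially the same route as the paper's proof: partition the vertices into three blocks by idempotent, observe that the two complementary idempotents give a complete bipartite join while all other adjacencies are inverse-pair matchings, compute the two possible degrees per block, and sum the Sombor contributions over the degree-homogeneous edge classes. The only differences are cosmetic — you label the identity-idempotent block $B_3$ where the paper calls it $V_1$, and you make the idempotent arithmetic explicit via the CRT decomposition rather than the paper's $e_3=1-e_2$ — and your edge bookkeeping and final regrouping match the paper's term for term.
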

\begin{proof}
	{By Lemma \ref{dim1},$|Id^\ast(\mathbb{Z}_n)|=2^2-1=3$, so let  
		$Id^\ast(\mathbb{Z}_n)=\{e_1=1,e_2,e_3\}$. By Lemma \ref{dim2}, $\mathbb{Z}_n$ has $r$ self-inverse elements, where $r\in \{2,4,8\}.$ One may partitioned $V(Cl_2(\mathbb{Z}_n))$ as follows:
		
		\[V(Cl_2(\mathbb{Z}_n))=V_1\sqcup V_2\sqcup V_3,\]

where 

\begin{align*}
	&V_1=\{(1,1),(1,u_2),\ldots,(1,u_r),(1,u_{r+1}),\ldots,(1,u_{\phi(n)})\},\\
	&V_2=\{(e_2,1),(e_2,u_2),\ldots,(e_2,u_r),(e_2,u_{r+1}),\ldots,(e_2,u_{\phi(n)})\}~and \\
	&V_3=\{(e_3,1),(e_3,u_2),\ldots,(e_3,u_r),(e_3,u_{r+1}),\ldots,(e_3,u_{\phi(n)})\}.
\end{align*} 
{\bf We note that 
	
	• for every vertex	of the form $(e_i,u_i)$, the component $u_i$ is self-inverse
	
	•• $e_3=1-e_2$ and so $e_2e_3=0$. Thus every vertex in $V_2$ is adjacent to all vertices in $V_3$ (and vice versa).

••• For convenience, a vertex of the form $(a,b)$ is called self-inverse if $b$ is self-inverse.}

The following table illustrates how vertices connect to each other
\begin{table}[H]
\centering
\caption{}\label{ta1}
\resizebox{\textwidth}{!}{\begin{tabular}{|c|c|c|c|c|c!{\vrule width 4pt}c|c|c|}
\hline
$1$ & $V_1$ & $(1,1)$ & $(1,u_2)$ & $\cdots$ & $(1,u_r)$ & $(1,u_{r+1})$ & $\cdots$ & $(1,u_{\phi(n)})$\bigstrut\\
\hline
$2$ & Between $V_1$ and $V_1$ & $0$ & $0$ & $0$ & $0$ & $1$ & $1$ & $1$\bigstrut\\
\hline
$3$ & Between $V_1$ and $V_2$ & $1$ & $1$ & $1$ & $1$ & $1$ & $1$ & $1$\bigstrut\\
\hline
$4$ & Between $V_1$ and $V_3$ & $1$ & $1$ & $1$ & $1$ & $1$ & $1$ & $1$\bigstrut\\
\hline
$5$ & deg($v$)
 & $2$ & $2$ & $2$ & $2$ & $3$ & $3$ & $3$\bigstrut\\
\noalign{\hrule height 4pt}
$6$ & $V_2$ & $(e_2,u_1)$ & $(e_2,u_2)$ & $\cdots$ & $(e_2,u_r)$ & $(e_2,u_{r+1})$ & $\cdots$ & $(e_2,u_{\phi(n)})$\bigstrut\\
\hline
$7$ & Between $V_2$ and $V_1$ & $1$ & $1$ & $1$ & $1$ & $1$ & $1$
 & $1$\bigstrut\\
\hline
$8$ & Between $V_2$ and $V_2$ & $\phi(n)$ & $\phi(n)$ & $\phi(n)$ & $\phi(n)$ & $\phi(n)$ & $\phi(n)$ & $\phi(n)$\bigstrut\\
\hline
$9$ & Between $V_2$ and $V_3$ & $0$ & $0$ & $0$ & $0$ & $1$ & $1$ & $1$\bigstrut\\
\hline
$10$ & deg($v$)
 & $1+\phi(n)$ & $1+\phi(n)$ & $1+\phi(n)$ & $1+\phi(n)$ & $2+\phi(n)$ & $2+\phi(n)$ & $2+\phi(n)$\bigstrut\\
\noalign{\hrule height 4pt}
$11$ & $V_3$ & $(e_3,u_1)$ & $(e_3,u_2)$ & $\cdots$ & $(e_3,u_r)$ & $(e_3,u_{r+1})$ & $\cdots$ & $(e_3,u_{\phi(n)})$\bigstrut\\
\hline
$12$ & Between $V_3$ and $V_1$ & $1$ & $1$ & $1$ & $1$ & $1$ & $1$
 & $1$\bigstrut\\
\hline
$13$ & Between $V_3$ and $V_2$ & $\phi(n)$ & $\phi(n)$ & $\phi(n)$ & $\phi(n)$ & $\phi(n)$ & $\phi(n)$ & $\phi(n)$\bigstrut\\
\hline
$14$ & Between $V_3$ and $V_3$ & $0$ & $0$ & $0$ & $0$ & $1$ & $1$ & $1$\bigstrut\\
\hline
$15$ & deg($v$)
 & $1+\phi(n)$ & $1+\phi(n)$ & $1+\phi(n)$ & $1+\phi(n)$ & $2+\phi(n)$ & $2+\phi(n)$ & $2+\phi(n)$\bigstrut\\
\hline
\end{tabular}}
\end{table}
Now, we compute the degree of each vertex.

{\bf(1) $N(V_1)$:} Every vertex in $\{(1,1),(1,u_2),\ldots,(1,u_r)\}$ is self-inverse and so there is no edges between them. Moreover, for every $(1,u_i)\in V_1$ with $r+1\leq i\leq \phi(n)$ there exists  $(1,u_j)\in V_1$ with $r+1\leq j\leq \phi(n)$ such that $u_i\cdot u_j=1$. Thus we have the second row in the Table 1.

Next, we investigate edges between $V_1$ and $V_2$. For every $(1,u)\in V_1$  there exists exactly one $(e_2,v)\in V_2$ such that $u\cdot v=1$. Thus the third and seventh rows of the table are gained. A similar argument gives fourth and twelfth rows of the table.

{\bf(2) $N(V_2)$:} In (1), we investigated edges between $V_1$ and $V_2$. Thus it is enough to consider edges whose end vertices are contained in $V_2$ and edges between $V_2$ and $V_3$. We know that $e_3=1-e_2$ and so $e_2\cdot e_3=0$. Thus every $(e_2,u)\in V_2$ is adjacent to every 
$(e_3,v)\in V_3$. Hence the eighth and thirteenth rows of the table are obtained.

Now, all edges with end vertices  in $V_2$ are considered. Every vertex in \linebreak  $\{(e_2,1),(e_2,u_2),\ldots,(e_2,u_r)\}$ is self-inverse and so there is no edges between them.  Moreover, for every $(e_2,u_i)\in V_2$,
$r+1\leq i\leq \phi(n)$ there exists $(e_2,u_j)\in V_2$, 
$r+1\leq j\leq \phi(n)$, such that
 $u_i\cdot u_j=1$. Therefore, we have the ninth rows of the table and in the tenth row the degree of vertices contained in $V_2$ are determined.
 
 {\bf(2) $N(V_3)$:} By a similar argument to that of $N(V_2)$, fifteenth row is obtained.
 
 Now, we are ready to compute the sombor index of $Cl_2(\mathbb{Z}_{p^\alpha q^\beta})$. Let  $uv\in E(G)$ and consider the following cases:
 
{\bf Case (1) $u,v\in V_1$:} We know that the $r$ first elements in $V_1$ are self-inverse and so there is no edges between them and for every $(1,u_i)\in V_1$ there exists  $(1,u_j)\in V_1$ with $r+1\leq i,j\leq \phi(n)$ such that $u_i\cdot u_j=1$. So there are $\dfrac{\phi(n)-r}{2}$ pairs of adjacent vertices (i.e., we have $\dfrac{\phi(n)-r}{2}$ edges) of degree 3. Hence, the sombor index of these edges is $\dfrac{\phi(n)-r}{2}\sqrt{3^2+3^2}=\dfrac{3\sqrt{2}}{2}(\phi(n)-r)$.

{\bf Case (2) $u\in V_1, v\in V_2$:}  Every self-inverse element of $V_1$ is adjacent one of self-inverse elements of $V_2$. Indeed, if $(1,u_i)\in V_1, (e_2,u_i)\in V_2$, with $1\leq i\leq r$, then $u_i\cdot u_i=1$ and so  $(1,u_i), (e_2,u_i)$ are adjacent. These vertices make $r$ edges. By the table, the sombor index of these edges is $r\sqrt{2^2+(1+\phi(n))^2}=r\sqrt{4+(1+\phi(n))^2}$. Similarly, every non self-inverse element of $V_1$ is adjacent one of non self-inverse elements of $V_2$. Indeed,  for every $(1,u_i)\in V_1$, there exists  $(e_2,u_j)\in V_2$, with $r+1\leq i,j\leq \phi(n)$, such that  $u_i\cdot u_j=1$. These vertices make $\phi(n)-r$ edges whose sombor index is $(\phi(n)-r)\sqrt{3^2+(2+\phi(n))^2}$. Therefore, the sombor index of edges in Case (2) is $r\sqrt{4+(1+\phi(n))^2}+(\phi(n)-r)\sqrt{9+(2+\phi(n))^2}$.

{\bf Case (3) $u\in V_1, v\in V_3$:} By a similar way to that of Case 2, we infer that the sombor index of edges in this case is $r\sqrt{4+(1+\phi(n))^2}+(\phi(n)-r)\sqrt{9+(2+\phi(n))^2}$.

{\bf Case (4) $u,v\in V_2$:} We know that the $r$ first elements of $V_2$ are self-inverse and there is no edge between them. Moreover, for every  $(e_2,u_i)\in V_2$ there exists $(e_2,u_j)\in V_2$, with $r+1\leq i,j\leq \phi(n)$, such that $u_i\cdot u_j=1$. These vertices make $\dfrac{\phi(n)-r}{2}$ of edges and their degrees is $2+\phi(n)$, by the table. Thus the sombor index of these edges is \[\dfrac{\phi(n)-r}{2}\sqrt{(2+\phi(n))^2+(2+\phi(n))^2}=\dfrac{\sqrt{2}}{2}(\phi(n)-r)(2+\phi(n)).\] 

{\bf Case (5) $u,v\in V_3$:} A similar argument to Case 4 implies that the sombor index of edges in this case is $\dfrac{\sqrt{2}}{2}(\phi(n)-r)(2+\phi(n))$.

{\bf Case (6) $u\in V_2, v\in V_3$:} For every $(e_2,u)\in V_2$ and $(e_3,v)\in V_3$  we have $e_2\cdot e_3=0$. Hence all vertices of $V_2,V_3$ are adjacent. We continue the proof in the following subcases:

{\bf Subcase (i): $u,v$ are self-inverse of degree $1+\phi(n)$}. As both of $V_2, V_3$ have $r$ such elements, by the table the sombor index in this case is \[r\times r\times\sqrt{(1+\phi(n))^2+(1+\phi(n))^2}=\sqrt{2}r^2(1+\phi(n)).\]

{\bf Subcase (ii): $u$ is self-inverse of degree $1+\phi(n)$ and $v$ is non self-inverse of degree $2+\phi(n)$.} As $V_2, V_3$, have $r$ and $\phi(n)-r$ of such elements, respectively, the sombor index of these edges by the table is 
\[r\times(\phi(n)-r)\sqrt{(1+\phi(n))^2+(2+\phi(n))^2}.\]

{\bf Subcase (iii): $u$ is non self-inverse of degree $(2+\phi(n))$ and $v$ is 
 self-inverse of degree $1+\phi(n)$.}  As $V_2, V_3$, have $ \phi(n)-r$ and $r$ of such elements, respectively, the sombor index of these edges by the table is 
\[r(\phi(n)-r)\sqrt{(1+\phi(n))^2+(2+\phi(n))^2}.\]

{\bf Subase (iv): $u,v$ are non self-inverse of degree $2+\phi(n)$}. As both of $V_2, V_3$ have $\phi(n)-r$ of such elements, by the table the sombor index in this case is

\[(\phi(n)-r)(\phi(n)-r)\sqrt{(2+\phi(n))^2+(2+\phi(n))^2}=\sqrt{2}(\phi(n)-r)^2(2+\phi(n)).\]

By all of the above cases 

\begin{align*}
	SO(Cl_2(\mathbb{Z}_{p^\alpha q^\beta})) &=\dfrac{3}{2}\sqrt{2}(\phi(n)-r)+2r\sqrt{4+(1+\phi(n))^2}\\
	&+2(\phi(n)-r)\sqrt{9+(2+\phi(n))^2}\\
	&+\sqrt{2}(\phi(n)-r)(2+\phi(n))(\phi(n)-r+1)\\
	&+r^2\sqrt{2}(1+\phi(n))+2r(\phi(n)-r)\sqrt{2\phi(n)^2+6\phi(n)+5}.
\end{align*}
}\end{proof}
The next example computes $SO(Cl_2(\mathbb{Z}_{24}))$.
\begin{example}
Let $n=24$. By Lemma \ref{dim2}, $r=8$. Moreover, $\phi(24)=8$. Thus

$SO(Cl_2(\mathbb{Z}_{24})) =\dfrac{3}{2}\sqrt{2}(8-8)+2\times 8\sqrt{4+(1+8)^2}+2(8-8)\sqrt{9+(2+8)^2}
+\sqrt{2}(8-8)(2+8)(8-8+1)+8^2\sqrt{2}(1+8)
+2\times 8(8-8)\sqrt{2\times 8^2+6\times 8+5}
=16\sqrt{85}+576\sqrt{2}.$	
\end{example}
\begin{figure}[H]
\centering
\includegraphics[scale=0.6]{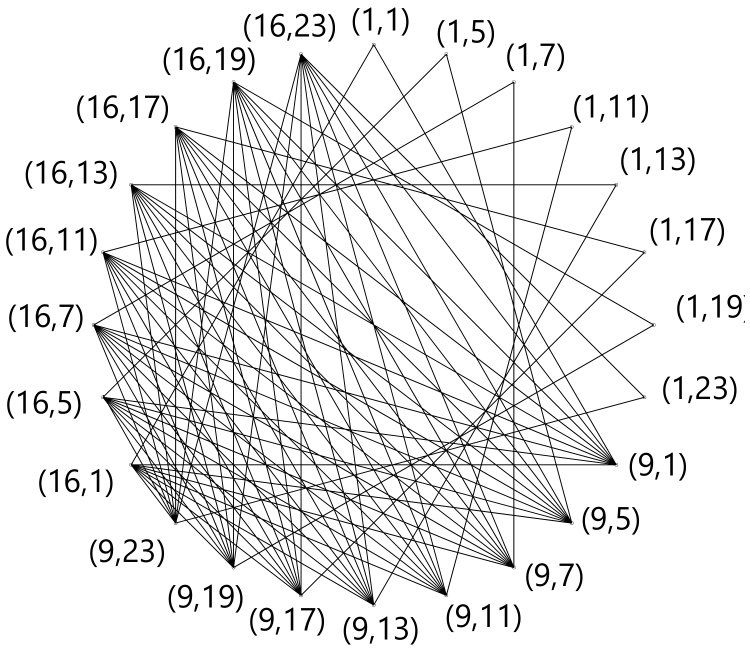}
\caption{$Cl_2(\mathbb{Z}_{24})$}
\end{figure}
\begin{align*}
&V_1=\{(1,1), (1,5), (1,7), (1,11), (1,13), (1,17), (1,19), (1,23)\},\\
&V_2=\{(9,1), (9,5), (9,7), (9,11), (9,13), (9,17), (9,19), (9,23)\},\\
&V_3=\{(16,1), (16,5), (16,7), (16,11), (16,13), (16,17), (16,19), (16,23)\}.
\end{align*}
Now, we are ready to state the main result of this paper.
\begin{thm}\label{akhar}

Let $n=\prod\limits_{i=1}^kp_i^{\alpha_i}$, 
$k\geq 2$. Then \begin{align*}
	SO(Cl_2(\mathbb{Z}_n)) &=\dfrac{\sqrt{2}}{2}(\phi(n)-r)(2^k-1)\\
	&\scalebox{0.75}{$+(2^k-2)[r\times\sqrt{(2^k-2)^2+(2^k-3+\phi(n))^2}+(\phi(n)-r)\sqrt{(2^k-1)^2+(2^k-2+\phi(n))^2}]$}\\
	&+\sqrt{2}r^2(2^{k-1}-1)(2^k-3+\phi(n))\\
	&+2r(\phi(n)-r)(2^{k-1}-1)\sqrt{(2^k-3+\phi(n))^2+(2^k-2+\phi(n))^2}\\
	&+\sqrt{2}(\phi(n)-r)^2(2^{k-1}-1)(2^k-2+\phi(n))\\
	&\scalebox{0.8}{$+(2^{2k-1}-6\times 2^{k-1}+4)[(\sqrt{2}r(2^k-3+\phi(n))+\sqrt{2}(\phi(n)-r)(2^k-2+\phi(n))],$}
\end{align*}
in which 

$$r=|U'(\mathbb{Z}_n)|=\begin{cases}2^s & \text{if} ~ m\in\{0,1\},	\\
	2^{s+1} & \text{if}~m=2, \\
	2^{s+2} & \text{if}~ m\geq3,	
\end{cases}$$

if we write $n=2^m\prod\limits_{i=1}^sp_i^{\alpha_i}$.
\end{thm}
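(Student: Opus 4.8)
The plan is to generalize the block-decomposition argument used for $n=p^\alpha q^\beta$ in the lemma above. By Lemma~\ref{dim1} we have $|Id^\ast(\mathbb{Z}_n)|=2^k-1$, so I would write $Id^\ast(\mathbb{Z}_n)=\{e_1=1,e_2,\dots,e_{2^k-1}\}$ and split $V(Cl_2(\mathbb{Z}_n))=\bigsqcup_{j=1}^{2^k-1}V_j$ into the blocks $V_j=\{(e_j,u):u\in U(\mathbb{Z}_n)\}$, each of size $\phi(n)$. The first step is to pin down the idempotent adjacency pattern. Via the Chinese Remainder Theorem $\mathbb{Z}_n\cong\prod_{i=1}^k\mathbb{Z}_{p_i^{\alpha_i}}$, each idempotent corresponds to a support $S\subseteq\{1,\dots,k\}$ with $e_Se_T=e_{S\cap T}$; hence $e_Se_T=0$ exactly when $S\cap T=\emptyset$, while $e_1=1$ has full support. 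This records precisely which blocks are \emph{fully} joined (disjoint supports) and which are joined only through reciprocal units.

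Next I would compute vertex degrees. Each block $V_T$ with $S\cap T=\emptyset$ contributes $\phi(n)$ neighbours to a vertex $(e_S,u)$, each remaining block $T\neq S$ contributes only the neighbour $(e_T,u^{-1})$, and the own block contributes one more neighbour when $u\notin U'(\mathbb{Z}_n)$ (so that $u^{-1}\neq u$). Writing $d_S=2^{k-|S|}-1$ for the number of blocks disjoint from $V_S$, this yields $\deg(e_S,u)=d_S\,\phi(n)+(2^k-2-d_S)$, plus $1$ when $u$ is non-self-inverse. For the identity block $d_S=0$, giving degrees $2^k-2$ and $2^k-1$, and for a generic block with a single disjoint partner $d_S=1$, giving $2^k-3+\phi(n)$ and $2^k-2+\phi(n)$; these are exactly the degree values appearing in the statement. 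Together with the split of each $V_j$ into $r$ self-inverse and $\phi(n)-r$ non-self-inverse vertices (Lemma~\ref{dim2}), this reproduces the adjacency table of the preceding lemma in the general setting.

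With the degrees fixed, the Sombor index is assembled by summing $\sqrt{\deg(u)^2+\deg(v)^2}$ over the edge classes, paralleling Cases~(1)--(6) above: the reciprocal edges inside the identity block; the edges joining the identity block to the generic blocks, split by self-inverse type; the reciprocal edges inside each generic block; the edges between two fully joined generic blocks in the four self-inverse/non-self-inverse subcases; and the single reciprocal-unit edges between intersecting generic blocks. Collecting these with the correct multiplicities should produce the six displayed summands, with $2^{k-1}-1$ arising as the number of complementary generic pairs and $2^{2k-1}-6\cdot 2^{k-1}+4=\frac12(2^k-2)(2^k-4)$ as the number of remaining generic block-pairs; substituting $r$ from Lemma~\ref{dim2} then finishes.

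The step I expect to be the main obstacle is exactly this final bookkeeping. For $k=2$ the only fully joined generic pair is the complementary pair with supports $\{1\},\{2\}$, but for $k\ge 3$ the quantity $d_S=2^{k-|S|}-1$ depends on the support size $|S|$, so both the vertex degrees and the number of fully joined block-pairs vary with $|S|$ rather than only with whether $e$ is the identity; moreover disjointness of supports is then strictly weaker than complementarity, so one must check whether the multiplicity $2^{k-1}-1$ of complementary pairs really accounts for every fully joined generic pair. Organising these $|S|$-dependent contributions so that they collapse into the claimed closed form, and confirming that the stated multiplicities exhaust all block-pairs of each type, is the delicate part, and is where I would concentrate the careful verification.
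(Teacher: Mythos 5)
You take essentially the same block decomposition as the paper (blocks $V_j$ indexed by the $2^k-1$ nonzero idempotents, full joins between blocks whose idempotents annihilate each other, reciprocal-unit matchings otherwise), but your CRT support analysis is more careful than the paper's, and the obstacle you flag in your last paragraph is not merely a delicate verification step --- it is fatal. Your own degree formula $\deg(e_S,u)=d_S\,\phi(n)+(2^k-2-d_S)$ (plus $1$ when $u$ is not self-inverse), with $d_S=2^{k-|S|}-1$, already refutes the claimed closed form once $k\geq 3$: any block with $|S|\leq k-2$ has $d_S\geq 3$, and its vertices have degrees that appear nowhere in the statement. Concretely, take $n=30$, $k=3$. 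The nonzero idempotents are $1,6,10,15,16,21,25$, and
$6\cdot 10\equiv 6\cdot 15\equiv 6\cdot 25\equiv 10\cdot 15\equiv 10\cdot 21\equiv 15\cdot 16\equiv 0\pmod{30}$,
so there are \emph{six} fully joined pairs of non-identity blocks, not $2^{k-1}-1=3$; in general the number of fully joined pairs is the number of unordered pairs of disjoint nonempty supports, namely $\tfrac{1}{2}\left(3^k-2^{k+1}+1\right)$, which exceeds $2^{k-1}-1$ for every $k\geq 3$. Correspondingly, a vertex $(6,u)$ has degree $3\phi(30)+3=27$ or $28$, while the theorem's formula only provides for degrees $2^k-3+\phi(n)=13$ and $2^k-2+\phi(n)=14$ on non-identity blocks.

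So the answer to the question you raise (whether the multiplicity $2^{k-1}-1$ of complementary pairs accounts for every fully joined generic pair) is no, and no reorganisation will collapse the $|S|$-dependent contributions into the six displayed summands: the statement is false for $k\geq 3$ and correct only for $k=2$, where every non-identity block has $|S|=k-1=1$, disjointness of supports coincides with complementarity, and the formula reduces to the preceding lemma on $\mathbb{Z}_{p^\alpha q^\beta}$. The paper's own proof commits exactly the error you isolate: its Table 2 postulates that the non-identity blocks pair off as $(V_{2m},V_{2m+1})$, that only these pairs are fully joined, and that every non-identity vertex has degree $2^k-3+\phi(n)$ or $2^k-2+\phi(n)$ --- assumptions valid only when $k=2$. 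Your plan is therefore the right computation to carry out, but carried out honestly it yields a different, $|S|$-stratified formula (a sum over support sizes with the counts above), i.e.\ a refutation of the theorem rather than a proof of it.
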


\begin{proof}
{By Lemma \ref{dim1}, $|Id(\mathbb{Z}_n)^\ast|=2^k-1$. For convenience, let $|U'(\mathbb{Z}_n)|=r$. One may consider $V_1\sqcup V_2\sqcup V_3\sqcup\cdots\sqcup V_{2^k-1}$, as partition for $V(Cl_2(\mathbb{Z}_n))$ in a similar way to that of Lemma \ref{dimfi}. To compute the degree of vertices we check the following situations:
\begin{table}[H]
\centering
\caption{}\label{ta2}
\scalebox{0.52}{\begin{tabular}{|c|c|c|c|c|c!{\vrule width 4pt}c|c|c|}
\hline
$1$ & $V_1$ & $(1,1)$ & $(1,u_2)$ & $\cdots$ & $(1,u_r)$ & $(1,u_{r+1})$ & $\cdots$ & $(1,u_{\phi(n)})$\\
\hline
$2$ & Between $V_1$ and $V_1$ & $0$ & $0$ & $0$ & $0$ & $1$ & $1$
 & $1$\\
\hline
$3$ & Between $V_1$ and $V_i$ & $2^k-2$ & $2^k-2$ & $2^k-2$ & $2^k-2$ & $2^k-2$
 & $2^k-2$ & $2^k-2$\\
\hline
$4$ & deg($v$)
 & $2^k-2$ & $2^k-2$ & $2^k-2$ & $2^k-2$ & $2^k-1$ & $2^k-1$
  & $2^k-1$\\
\noalign{\hrule height 4pt}
$5$ & $V_2$ & $(e_2,1)$ & $(e_2,u_2)$ & $\cdots$ & $(e_2,u_r)$ & $(e_2,u_{r+1})$ & $\cdots$ & $(e_2,u_{\phi(n)})$\\
\hline
$6$ & Between $V_2$ and $V_1$ & $1$ & $1$ & $1$ & $1$ & $1$ & $1$
 & $1$\\
\hline
$7$ & Between $V_2$ and $V_2$ & $0$ & $0$ & $0$ & $0$ & $1$ & $1$
 & $1$\\
\hline
$8$ & Between $V_2$ and $V_3$ & $\phi(n)$ & $\phi(n)$ & $\phi(n)$ & $\phi(n)$ & $\phi(n)$ & $\phi(n)$ & $\phi(n)$\\
\hline
$9$ &($i\neq 1,2,3$) Between $V_2$ and $V_i$ & $2^k-4$ & $2^k-4$ & $2^k-4$ & $2^k-4$ & $2^k-4$ & $2^k-4$  & $2^k-4$\\
\hline
$10$ & deg($v$)
 & $2^k-3+\phi(n)$ & $2^k-3+\phi(n)$ & $2^k-3+\phi(n)$ & $2^k-3+\phi(n)$ & $2^k-2+\phi(n)$ & $2^k-2+\phi(n)$ & $2^k-2+\phi(n)$\\
\hline
$11$ & $V_3$ & $(e_3,1)$ & $(e_3,u_2)$ & $\cdots$ & $(e_3,u_r)$ & $(e_3,u_{r+1})$ & $\cdots$ & $(e_3,u_{\phi(n)})$\\
\hline
$12$ & Between $V_3$ and $V_1$ & $1$ & $1$ & $1$ & $1$ & $1$ & $1$
 & $1$\\
\hline
$13$ & Between $V_3$ and $V_3$ & $0$ & $0$ & $0$ & $0$ & $1$ & $1$
 & $1$\\
\hline
$14$ & Between $V_3$ and $V_2$ & $\phi(n)$ & $\phi(n)$ & $\phi(n)$ & $\phi(n)$ & $\phi(n)$ & $\phi(n)$ & $\phi(n)$\\
\hline
$15$ &($i\neq 1,2,3$) between $V_3$ and $V_i$ & $2^k-4$ & $2^k-4$ & $2^k-4$ & $2^k-4$ & $2^k-4$ & $2^k-4$  & $2^k-4$\\
\hline
$16$ & deg($v$)
 & $2^k-3+\phi(n)$ & $2^k-3+\phi(n)$ & $2^k-3+\phi(n)$ & $2^k-3+\phi(n)$ & $2^k-2+\phi(n)$ & $2^k-2+\phi(n)$ & $2^k-2+\phi(n)$\\
\hline
\noalign{\hrule height 4pt}
$17$ & $V_4$ & $(e_4,1)$ & $(e_4,u_2)$ & $\cdots$ & $(e_4,u_r)$ & $(e_4,u_{r+1})$ & $\cdots$ & $(e_4,u_{\phi(n)})$\\
\hline
$18$ & Between $V_4$ and $V_1$ & $1$ & $1$ & $1$ & $1$ & $1$ & $1$
 & $1$\\
\hline
$19$ & Between $V_4$ and $V_4$ & $0$ & $0$ & $0$ & $0$ & $1$ & $1$
 & $1$\\
\hline
$20$ & Between $V_4$ and $V_5$ & $\phi(n)$ & $\phi(n)$ & $\phi(n)$ & $\phi(n)$ & $\phi(n)$ & $\phi(n)$ & $\phi(n)$\\
\hline
$21$ &($i\neq 1,4,5$) Between $V_4$ and $V_i$ & $2^k-4$ & $2^k-4$ & $2^k-4$ & $2^k-4$ & $2^k-4$ & $2^k-4$  & $2^k-4$\\
\hline
$22$ & deg($v$)
 & $2^k-3+\phi(n)$ & $2^k-3+\phi(n)$ & $2^k-3+\phi(n)$ & $2^k-3+\phi(n)$ & $2^k-2+\phi(n)$ & $2^k-2+\phi(n)$ & $2^k-2+\phi(n)$\\
\hline
$23$ & $V_5$ & $(e_5,1)$ & $(e_5,u_2)$ & $\cdots$ & $(e_5,u_r)$ & $(e_5,u_{r+1})$ & $\cdots$ & $(e_5,u_{\phi(n)})$\\
\hline
$24$ & Between $V_5$ and $V_1$ & $1$ & $1$ & $1$ & $1$ & $1$ & $1$
 & $1$\\
\hline
$25$ & Between $V_5$ and $V_5$ & $0$ & $0$ & $0$ & $0$ & $1$ & $1$
 & $1$\\
\hline
$26$ & Between $V_5$ and $V_4$ & $\phi(n)$ & $\phi(n)$ & $\phi(n)$ & $\phi(n)$ & $\phi(n)$ & $\phi(n)$ & $\phi(n)$\\
\hline
$27$ &($i\neq 1,4,5$) between $V_5$ and $V_i$ & $2^k-4$ & $2^k-4$ & $2^k-4$ & $2^k-4$ & $2^k-4$ & $2^k-4$  & $2^k-4$\\
\hline
$28$ & deg($v$)
 & $2^k-3+\phi(n)$ & $2^k-3+\phi(n)$ & $2^k-3+\phi(n)$ & $2^k-3+\phi(n)$ & $2^k-2+\phi(n)$ & $2^k-2+\phi(n)$ & $2^k-2+\phi(n)$\\
\hline
\noalign{\hrule height 4pt}
 & $\cdots$ & $\cdots$ & $\cdots$ & $\cdots$ & $\cdots$ & $\cdots$ & $\cdots$ & $\cdots$\\
\noalign{\hrule height 4pt}
$29$ & $V_{2^k}-2$ & $(e_{2^k}-2,1)$ & $(e_{2^k}-2,u_2)$ & $\cdots$ & $(e_{2^k}-2,u_r)$ & $(e_{2^k}-2,u_{r+1})$ & $\cdots$ & $(e_{2^k}-2,u_{\phi(n)})$\\
\hline
$30$ & Between $V_{2^k}-2$ and $V_1$ & $1$ & $1$ & $1$ & $1$ & $1$ & $1$
 & $1$\\
\hline
$31$ & Between $V_{2^k}-2$ and $V_{2^k}-2$ & $0$ & $0$ & $0$ & $0$ & $1$ & $1$
 & $1$\\
\hline
$32$ & Between $V_{2^k}-2$ and $V_{2^k}-1$ & $\phi(n)$ & $\phi(n)$ & $\phi(n)$ & $\phi(n)$ & $\phi(n)$ & $\phi(n)$ & $\phi(n)$\\
\hline
$33$ &($i\neq 1,{2^k}-2,{2^k}-1$) Between $V_{2^k}-2$ and $V_i$ & $2^k-4$ & $2^k-4$ & $2^k-4$ & $2^k-4$ & $2^k-4$ & $2^k-4$  & $2^k-4$\\
\hline
$34$ & deg($v$)
 & $2^k-3+\phi(n)$ & $2^k-3+\phi(n)$ & $2^k-3+\phi(n)$ & $2^k-3+\phi(n)$ & $2^k-2+\phi(n)$ & $2^k-2+\phi(n)$ & $2^k-2+\phi(n)$\\
\hline
$35$ & $V_{2^k}-1$ & $(e_{2^k}-1,1)$ & $(e_{2^k}-1,u_2)$ & $\cdots$ & $(e_{2^k}-1,u_r)$ & $(e_{2^k}-1,u_{r+1})$ & $\cdots$ & $(e_{2^k}-1,u_{\phi(n)})$\\
\hline
$36$ & Between $V_{2^k}-1$ and $V_1$ & $1$ & $1$ & $1$ & $1$ & $1$ & $1$
 & $1$\\
\hline
$37$ & Between $V_{2^k}-1$ and $V_{2^k}-1$ & $0$ & $0$ & $0$ & $0$ & $1$ & $1$
 & $1$\\
\hline
$38$ & Between $V_{2^k}-1$ and $V_{2^k}-2$ & $\phi(n)$ & $\phi(n)$ & $\phi(n)$ & $\phi(n)$ & $\phi(n)$ & $\phi(n)$ & $\phi(n)$\\
\hline
$39$ &($i\neq 1,{2^k}-2,{2^k}-1$) Between $V_{2^k}-1$ and $V_i$ & $2^k-4$ & $2^k-4$ & $2^k-4$ & $2^k-4$ & $2^k-4$ & $2^k-4$  & $2^k-4$\\
\hline
$40$ & deg($v$)
 & $2^k-3+\phi(n)$ & $2^k-3+\phi(n)$ & $2^k-3+\phi(n)$ & $2^k-3+\phi(n)$ & $2^k-2+\phi(n)$ & $2^k-2+\phi(n)$ & $2^k-2+\phi(n)$\\
\hline
\noalign{\hrule height 4pt}
\end{tabular}}
\end{table}
{\bf(1) $N(V_1)$:} In a similar way to that of $n=p^\alpha q^\beta$, we get the second row, which shows that each vertex of $V_1$ has how many neighbors in $V_1$. On the other hand, for every $(1,u)\in V_1$, there exists  $(e,v)\in V_i$, $2\leq i \leq 2^k-1$, such that  $u\cdot v=1$. Hence, the third, sixth, twelfth, eighteenth, twenty-forth, thirtieth and thirty-sixth rows are obtained. We have inserted the degree of each vertex contained in $V_1$ in the fourth row.  

{\bf(2) $N(V_2)$:} In a similar way to that of (1), we get the seventh row of the table, which shows that each vertex of $V_2$ has how many neighbors in $V_2$. Next, we find the neighbors of  vertices of $V_2$ in $V_3$. We know that each vertex $(e_2,u)\in V_2$ is adjacent to $(e_3,v)\in V_3$. Therefore, we get the eighth and fourteenth rows of the table. Now, we calculate the number of neighbors in $V_i$ 
($4\leq i\neq 2^k-1$), for every vertex in  $V_2$. For every vertex $(e_2,u)\in V_2$ there exists $(e_i,v)\in V_i$ ($4\leq i\neq 2^k-1$) such that $u\cdot v=1$. Thus, in the tenth row of the table we have the degree of each vertex of $V_2$. 

{\bf(3) $N(V_3)$:} It is similar to (2) .

{\bf(4)} In a similar manner, we get the degrees all vertices. If $v$ is an arbitrary vertex, then $deg(v)\in \{2^k-2, 2^k-1, 2^k-3+\phi(n),  2^k-2+\phi(n)\}$.

 Now, we are ready to compute the sombor index of $Cl_2(\mathbb{Z}_{n})$. Let  $uv\in E(G)$ and consider the following cases:

{\bf Case (1) $u,v\in V_1$:} There are $\dfrac{\phi(n)-r}{2}$ edges with end vertices in $V_1$, each of ends has degree  $2^k-1$. Thus the sombor index of these edges is 
\[\dfrac{\phi(n)-r}{2}\sqrt{(2^k-1)^2+(2^k-1)^2}=\dfrac{\sqrt{2}}{2}(\phi(n)-r)(2^k-1).\]

{\bf Case (2) $u\in V_1$, $v\in V_i$
	$i=2,3,\ldots,2^k-1$:}
Every self-inverse vertex in $V_1$ of degree $2^k-2$ is adjacent to a self-inverse vertex in $V_i$ ($i\neq 1$) of degree $2^k-3+\phi(n)$. Moreover, every non self-inverse vertex in  $V_1$ which has degree $2^k-1$ is adjacent to (exactly) one non self-inverse vertex in $V_i$ ($i\neq 1$) of degree $2^k-2+\phi(n)$. We know that each $V_i$ ($i=1,2,\ldots,2^k-1$) has $r$ and $\phi(n)-r$ self-inverse and non self-inverse elements, respectively. Thus the sombor index of edges between $V_1$ and each   $V_i$ ($i=2,3,\ldots,2^k-1$) is \[r\times\sqrt{(2^k-2)^2+(2^k-3+\phi(n))^2}+(\phi(n)-r)\times\sqrt{(2^k-1)^2+(2^k-2+\phi(n))^2}.\] Since there are  $2^k-2$ of such $V_i$'s, the total sombor index in this case is 
\[\scalebox{0.85}{$(2^k-2)(r\times\sqrt{(2^k-2)^2+(2^k-3+\phi(n))^2}+(\phi(n)-r)\times\sqrt{(2^k-1)^2+(2^k-2+\phi(n))^2}).$}\]

{\bf Case (3) $u\in V_{2m}$, $v\in V_{2m+1}$, $m=1,2,\ldots,\dfrac{2^k-4}{2},\dfrac{2^k-2}{2}$:} Each vertex in $V_{2m}$ is adjacent to every vertex in $V_{2m+1}$ ($m=1,2,\ldots,\dfrac{2^k-4}{2},\dfrac{2^k-2}{2}$). We consider the following four subcases:

{\bf Subcase (i): Both of end vertices in $V_{2m}$ and $V_{2m+1}$ are self-inverse of degrees $2^k-3+\phi(n)$ ($m=1,2,\ldots,\dfrac{2^k-4}{2},\dfrac{2^k-2}{2}$)}. There are $r$ self-inverse elements in each $V_i$ ($i=2,3,\ldots,2^k-1$) and all of them are adjacent to each other. Moreover, there are $\dfrac{2^k-2}{2}$ possibilities to choose $V_i$'s. Thus the sombor index in this subcase is \[\scalebox{0.85}{$r\times r\times(\dfrac{2^k-2}{2})\sqrt{(2^k-3+\phi(n))^2+(2^k-3+\phi(n))^2}=\sqrt{2}r^2(2^{k-1}-1)(2^k-3+\phi(n)).$}\]

{\bf Subcase (ii): One end is a self-inverse vertex in $V_{2m}$  of degree  $2^k-3+\phi(n)$ and the other end is a non self-inverse vertex in $V_{2m+1}$  of degree $2^k-2+\phi(n)$}. There are $r$ self-inverse and $\phi(n)-r$ non self-inverse vertices in each  $V_{i}$, respectively. Moreover, there are $2^{k-1}-1$  possibilities to choose  $V_i$'s. Thus the sombor index in this subcase is \[r\times(\phi(n)-r)(2^{k-1}-1)\sqrt{(2^k-3+\phi(n))^2+(2^k-2+\phi(n))^2}.\]

{\bf Subcase (iii): One end is a non self-inverse vertex in $V_{2m}$  of degree  $2^k-2+\phi(n)$ and the other end is a self-inverse vertex in $V_{2m+1}$  of degree $2^k-3+\phi(n)$}.
	By a similar argument to that of Subcase (ii), the sombor index is 

\[(\phi(n)-r)\times r\times(2^{k-1}-1)\sqrt{(2^k-2+\phi(n))^2+(2^k-3+\phi(n))^2}.\]

{\bf Subcase (iv):  Both of end vertices in $V_{2m}$ and $V_{2m+1}$ are non self-inverse of degrees $2^k-2+\phi(n)$.} The sombor index in this case is 

\begin{align*}
&(\phi(n)-r)(\phi(n)-r)(2^{k-1}-1)\sqrt{(2^k-2+\phi(n))^2
	+(2^k-2+\phi(n))^2}\\
&=\sqrt{2}(\phi(n)-r)^2(2^{k-1}-1)(2^k-2+\phi(n)).\end{align*}

{\bf Case (4) $u\in V_i$, $v\in V_j$, $i=2,3,\ldots,2^k-1$,
$i\neq j$:} If  $u\in V_{2m}$, $v\notin V_{2m+1}$, $m=1,2,\ldots,\dfrac{2^k-4}{2},\dfrac{2^k-2}{2}$. For instance, we suppose that one end is in $V_2$ and the other one is in  $V_4$. Then we multiple the result to all possible cases. For every self-inverse  $(e_2,u)\in V_2$ of degree   $2^k-3+\phi(n)$ there exists  $(e_4,v)\in V_4$  of degree $2^k-3+\phi(n)$ such that  $u\cdot v=1$. There are $r$ self-inverse elements in each $V_i$, and so the sombor index of such edges is 
\[r\sqrt{(2^k-3+\phi(n))^2+(2^k-3+\phi(n))^2}=\sqrt{2}r(2^k-3+\phi(n)).\]
In a same manner, for every non self-inverse element $(e_2,u)\in V_2$ of degree $2^k-2+\phi(n)$ there exists a non self-inverse element $(e_4,v)\in V_4$ of degree  $2^k-2+\phi(n)$ such that they are adjacent. Since the number of such elements in each $V_i$ is $\phi(n)-r$ , the sombor index of such edges is 
\[(\phi(n)-r)\sqrt{(2^k-2+\phi(n))^2+(2^k-2+\phi(n))^2}=\sqrt{2}(\phi(n)-r)(2^k-2+\phi(n)).\] Finally, the sombor index of all edges with end vertices in $V_2,V_4$ is 
\[\sqrt{2}r(2^k-3+\phi(n))+\sqrt{2}(\phi(n)-r)(2^k-2+\phi(n)).\]

We note that each edge has one end in $V_p$ and one end in $V_q$ such that ($p\neq q$, $p,q\neq 1$). Obviously, the number of all choices is $\binom{2^k-2}{2}$, but we investigated  $V_{2m}$,  $V_{2m+1}$ ($m=1,2,\ldots,\dfrac{2^k-4}{2},\dfrac{2^k-2}{2}$) (there are $2^{k-1}-1$ possible cases). Thus there are 
\[\binom{2^k-2}{2}-(2^{k-1}-1)=2^{2k-1}-6\times 2^{k-1}+4\] desired possible cases. Hence the sombor index in this case is

\[(2^{2k-1}-6\times 2^{k-1}+4)[(\sqrt{2}r(2^k-3+\phi(n))+\sqrt{2}(\phi(n)-r)(2^k-2+\phi(n))].\]

Therefore, the sombor index of $Cl_2(\mathbb{Z}_n)$, where $n=\prod\limits_{i=1}^kp_i^{\alpha_i}$,
$k\geq 2$, is 
\begin{align*}
	SO(Cl_2(\mathbb{Z}_n)) &=\dfrac{\sqrt{2}}{2}(\phi(n)-r)(2^k-1)\\
	&\scalebox{0.75}{$+(2^k-2)[r\times\sqrt{(2^k-2)^2+(2^k-3+\phi(n))^2}+(\phi(n)-r)\sqrt{(2^k-1)^2+(2^k-2+\phi(n))^2}]$}\\
	&+\sqrt{2}r^2(2^{k-1}-1)(2^k-3+\phi(n))\\
	&+2r(\phi(n)-r)(2^{k-1}-1)\sqrt{(2^k-3+\phi(n))^2+(2^k-2+\phi(n))^2}\\
	&+\sqrt{2}(\phi(n)-r)^2(2^{k-1}-1)(2^k-2+\phi(n))\\
	&\scalebox{0.8}{$+(2^{2k-1}-6\times 2^{k-1}+4)[(\sqrt{2}r(2^k-3+\phi(n))+\sqrt{2}(\phi(n)-r)(2^k-2+\phi(n))],$}
\end{align*}
in which 

$$r=|U'(\mathbb{Z}_n)|=\begin{cases}2^s & \text{if} ~ m\in\{0,1\},	\\
	2^{s+1} & \text{if}~m=2, \\
	2^{s+2} & \text{if}~ m\geq3,	
\end{cases}$$

if we write $n=2^m\prod\limits_{i=1}^sp_i^{\alpha_i}$.
}
\end{proof}	

The last example of this paper uses Theorem \ref{akhar} to compute $SO(Cl_2(\mathbb{Z}_{30}))$.
\begin{example}
Let $n=30$. Then $r=8$ and 
\[\phi(n)=(2^1-2^0)(3^1-3^0)(5^1-5^0)=1\times 2\times 4=8, \quad k=3.\]
Thus \begin{align*}
	SO(Cl_2(\mathbb{Z}_{30})) &=\dfrac{\sqrt{2}}{2}(8-4)(2^3-1)\\
	&+(2^3-2)[4\times\sqrt{(2^3-2)^2+(2^3-3+8)^2}\\
	&+(8-4)\sqrt{(2^3-1)^2+(2^3-2+8)^2}]\\
	&+\sqrt{2}\times 4^2(2^{3-1}-1)(2^3-3+8)+\\
	&2\times 4(8-4)(2^{3-1}-1)\sqrt{(2^3-3+8)^2+(2^3-2+8)^2}\\
	&+\sqrt{2}(8-4)^2(2^{3-1}-1)(2^3-2+8)\\
	&\scalebox{0.9}{$+(2^{2\times 3-1}-6\times 2^{3-1}+4)[\sqrt{2}\times 4(2^3-3+8)+\sqrt{2}(8-4)(2^3-2+8)]$}\\
	&\scalebox{0.9}{$=14\sqrt{2}+24\sqrt{205}+24\sqrt{245}+624\sqrt{2}+96\sqrt{365}+672\sqrt{2}+1296\sqrt{2}$}\\
	&=2606\sqrt{2}+28\sqrt{5}+24\sqrt{205}+96\sqrt{365}
\end{align*}	
\end{example}	
{\bf Declarations}

\textbf{Conflicts of interest/Competing interests:} There is no conflict of interest regarding the publication of this paper.

\textbf{Data availability:} Not
applicable.

\textbf{Funding:} Not applicable.
%%%%%%%%%%%%%%%%%%%%%%%%%%%%%%%%%%%%%%%%%%%%%%%%%%%%%%%%%%%%%%%%%%%%%%%%%%%%%%%%%%%%%%%%%%%%%%%%%%%%%%%%%%%%%%%%%%%%%%%%%%%%%%%%%%%%%%%%%%%%%%%%%%%%%%

{}


\begin{thebibliography}{}{\small

%\bibitem{ab} N. Abachi, Sh. Sahebi,  On the metric dimension of a total graph of
%non-zero annihilating ideals, An. \c{S}t. Univ. Ovidius Constan\c{t}a  28 (2020) 5--14.


\bibitem{Ali} S. Alikhani, N. Ghanbari, Sombor index of polymers, MATCH Commun. Math. Comput. Chem. \textbf{86} (2021), no. 3, 715-728.

\bibitem{Das} K. C. Das, I. Gutman, On Sombor index of trees, Appl. Math. Comput. \textbf{412}, (2022), 126575.

\bibitem{Do} T. Do${\rm\breve{s}}$slic, T. R${\rm\acute{e}}$ti, A. Ali, On the structure of graphs with integer Sombor indices, Discrete Math. Lett. \textbf{7} (2021), 1-4.
\bibitem{gur} A. G${\rm\ddot{u}}$rsoy, A.  ${\rm\ddot{U}}$lker,  N. K. G${\rm\ddot{u}}$rsoy, Sombor index of zero-divisor graphs of commutative rings, An. St. Univ.  Ovidius Constanta Vol. \textbf{30(2)} (2022), 231-257.

\bibitem{Gut} I. Gutman, Geometric approach to degree-based topological indices: Sombor indices, MATCH Commun. Math. Comput. Chem. \textbf{86} (2021), no. 1, 11-16.

\bibitem{hab} M. Habibi, E. Yetkin, Celikel,C. Abdioglu, Clean graph of a ring, J. Algebra Appl. \textbf{20} 9 (2021), 2150156.

\bibitem{hew} E. Hewitt and H. S. Zuckerman, The multiplicative semigroup of integers modulo m, Pacific J. Math.,
4(1960) 1291–1308.

\bibitem{mat} P. Mathil, J. Kumar, Strong metric dimension of clean graphs of commutative rings, J. Algebra Appl.  (2025), https://doi.org/10.1142/S0219498826501197.

\bibitem{ram} V. Ramanathan, C. Selvaraj, S. Pirzada, Clasdification of rings associated with the genus of clean graphs, Algebra Colloq. \textbf{31} (03) (2024), 451--466.


\bibitem{rat} B. A. Rather, M. Imran, S. Pirzada, Sombor index and eigenvalues of comaximal graphs of commutative rings, J. Algebra Appl. \textbf{23} (2024), 2450115.

\bibitem{Shet} S. S. Shetty, K. A. Bhat, Sombor index of hypergraphs, MATCH Commun. Math. Comput. Chem. \textbf{91} (2024), 235-254.

\bibitem{sin} R. Singh, S. C. Patekar, On the clean graph of a ring, arXiv:2301.09433.

\bibitem{Wei} P. Wei, M. Liu, Note on Sombor index of connected graphs with given degree sequence, Discrete Appl. Math. \textbf{330} (2023), 51-55.

}
\end{thebibliography}
\end{document}